\title{Scalar $q$-subresultants and Dickson matrices}
\author{Bence Csajb\'ok\thanks{Supported by the \'UNKP-18-4 New National Excellence Program of the Ministry of Human Capacities and by OTKA Grant No. K 124950. }}
\date{}
\newcommand{\F}{{\mathbb F}}
\newcommand{\K}{{\mathbb K}}
\newcommand{\la}{\langle}
\newcommand{\ra}{\rangle}
\newtheorem{theorem}{Theorem}[section]
\newtheorem{lemma}[theorem]{Lemma}
\newtheorem{corollary}[theorem]{Corollary}
\newtheorem{proposition}[theorem]{Proposition}
\newtheorem{result}[theorem]{Result}
\DeclareMathOperator{\PG}{{PG}}
\begin{document}
\maketitle

\begin{abstract}
Following the ideas of Ore and Li we study $q$-analogues of scalar subresultants and show how these results can be applied to determine the rank of an $\F_q$-linear transformation $f$ of $\F_{q^n}$. As an application we show how certain minors of the Dickson matrix $D(f)$, associated with $f$, determine the rank of $D(f)$ and hence the rank of $f$. 
 \end{abstract}


\bigskip
{\it Keywords: Dickson matrix, subresultant, linearized polynomial}

\section{Introduction}

Let $f(x)=\sum_{i=0}^{k} a_i x^i$ and $g(x)=\sum_{i=0}^l b_i x^i$,  with $a_kb_l\neq 0$, be two univariate polynomials with coefficients in the field $\K$ \footnote{Note that in many of the cited literature $a_0$ and $b_0$ are used to denote the leading coefficients of $f$ and $g$.}. In elimination theory, the classical resultant of $f$ and $g$ is 
\[\mbox{Res}(f,g)=(-1)^{kl}b_l^k\prod_{i=1}^lf(\xi_i),\]
where $g(x)=b_l\prod_{i=1}^l(x-\xi_i)$ with $\xi_1,\xi_2,\ldots,\xi_l \in \overline{\K}$ (where $\overline{\K}$ denotes the algebraic closure of $\K$).
For $0\leq m\leq \min\{k,l\}$ consider the following $(k+l-2m)\times (k+l-2m)$ matrix:
\[R_m(f,g):=
\begin{pmatrix}
a_k & a_{k-1}&  a_{k-2} & \ldots & a_{k-l+m+1}&\ldots & a_{2m-l+2} & a_{2m-l+1} \\
0 & a_k & a_{k-1}&\ldots & a_{k-l+m+2} & \ldots &a_{2m-l+3} & a_{2m-l+2} \\
\vdots & \vdots & \vdots & \vdots & \vdots & \vdots & \vdots & \vdots\\
0 & \ldots & 0 & \ldots & a_k & \ldots &a_{m+1}& a_{m} \\
b_l & b_{l-1}&  b_{l-2} & \ldots &\ldots& \ldots &b_{2m-k+2} & b_{2m-k+1} \\
0 & b_l & b_{l-1}&\ldots & \ldots& \ldots &b_{2m-k+3} & b_{2m-k+2} \\
\vdots & \vdots & \vdots & \vdots & \vdots & \vdots & \vdots & \vdots \\
0 & \ldots & 0 & b_l & \ldots &\ldots &b_{m+1} & b_{m} \\
\end{pmatrix}
,\]
where coefficients out of range are considered to be $0$.

The determinant of $R_m(f,g)$ is also called the $m$-th \emph{scalar subresultant} of $f$ and $g$. 
Note that $|R_0(f,g)|=\mbox{Res}(f,g)$ and hence $\gcd(f,g)=1$ if and only if $|R_0(f,g)|\neq 0$. This result has the following well-known generalization in elimination theory. For a proof we cite here the Appendix of \cite{Hetamas} and the references therein, since the proof of Theorem \ref{resoriginal2} was motivated by the arguments found there.

\begin{result}
\label{resoriginal}
The degree of $\gcd(f,g)$ is $t$ if and only if $|R_0(f,g)|=\ldots=|R_{t-1}(f,g)|=0$ and $|R_t(f,g)|\neq 0$.
\end{result}

The strength of the Result \ref{resoriginal} is that it provides a way to study the number of common roots of $f$ and $g$ only by means of their coefficients. 

\medskip

Now let $\K$ be a field of characteristic $p$, and let $q$ be a power of $p$. 
A \emph{$q$-polynomial} over $\K$ with \emph{$q$-degree} $m$ is a polynomial of the form $f(x)=\sum_{i=0}^{m}a_ix^{q^i}$, with $a_m\neq 0$ and $a_0,a_1,\ldots,a_m\in \K$. When $q=p$ prime, $q$-analogue of the classical resultant for $q$-polynomials was already mentioned  in \cite[Chapter 1, Section 7]{Ore}, however, an explicit formula was not given there. An explicit formula can be found for example in \cite[page 59]{fa}. 

The subresultant theory was extended to Ore polynomials (cf. \cite{Ore2}) and hence also to the non-commutative ring of $q$-polynomials by Li in \cite{Li}. Here the non-commutative operation between two $q$-polynomials is composition, while addition is defined as usual. Note that this ring is a right-Euclidean domain with respect to the $q$-degree, cf. \cite{Ore}. When $g=f\circ h$ then we will also say that $h$ is a symbolic right divisor of $g$. 
Note that in the paper of Li the word subresultant is used to what is also known as \emph{polynomial subresultant}. In the classical theory the $m$-th scalar subresultant is  the leading coefficient of the $m$-th polynomial subresultant. See for example \cite[Section 2]{multivar} for a brief summary, where $S_m^{(m)}$ corresponds to what we (and some other authors) call scalar subresultant. For the various notions consult with \cite{sum}. 

Let $\K=\F_{q^n}$ and consider $\K$ as an $n$-dimensional vector space over $\F_q$.  Then there is an isomorphism between the ring of $q$-polynomials 
\[\left\{ \sum_{i=0}^{n-1}a_i x^{q^i} \colon a_0,\ldots,a_{n-1}\in \F_{q^n}\right\} \] considered modulo $(x^{q^n}-x)$ and the ring of $\F_q$-linear transformations of $\F_{q^n}$. The set of roots of a $q$-polynomial form an $\F_q$-subspace and the dimension of this subspace is the dimension of the kernel of the corresponding $\F_q$-linear transformation. Thus $\deg \gcd(f(x),x^{q^n}-x)=q^{n-k}$, where $k$ is the rank of the $\F_q$-linear transformation of $\F_{q^n}$ defined by $f(x)$. When $n$ is clear from the context, then we will say that $k$ is the \emph{rank} of $f$. 

\begin{result}[Ore {\cite[Theorem 2]{Ore}}]
\label{eukl}
The greatest common symbolic right divisor of two $q$-polynomials is the same as their ordinary greatest common divisor.
\end{result}

It follows that the $q$-subresultant theory can be applied to determine $\gcd(f(x),x^{q^n}-x)$ and hence the rank of $f$. 
Our contribution to this theory is a direct proof to a $q$-analogue of Result \ref{resoriginal} providing sufficient and necessary conditions which ensure that $f$ has rank $n-k$ (cf. Theorem \ref{resoriginal2}). 

\medskip

Recall that the Dickson matrix associated with $f(x)=\sum_{i=0}^{n-1}a_i x^{q^i}\in \F_{q^n}[x]$ is
\[D(f):=
\begin{pmatrix}
a_0 & a_1 & \ldots & a_{n-1} \\
a_{n-1}^q & a_0^q & \ldots & a_{n-2}^q \\
\vdots & \vdots & \vdots & \vdots \\
a_1^{q^{n-1}} & a_2^{q^{n-1}} & \ldots & a_0^{q^{n-1}}
\end{pmatrix}
.\]

It is well-known that the rank of $f$ equals the rank of $D(f)$, see for example \cite[Proposition 4.4]{ffa} or \cite[Proposition 5]{GM}.
In some recent constructions of maximum scattered subspaces and MRD-codes it was crucial to the determine the rank of certain Dickson matrices (cf. \cite[Section 7]{withZanella} and \cite[Section 5]{withZullo}). In these papers this was done by considering certain minors of such matrices and excluding the possibility that their determinants vanish at the same time. On the other hand, in \cite[Section 3]{withZhou} Dickson matrices were used to prove non-existence results of certain MRD-codes. This was done by proving that, for a certain choice of the parameters, all $6\times 6$ submatrices of a $9\times 9$ Dickson matrix have zero determinant.
As an application of Theorem \ref{resoriginal2} we show that it is enough to investigate the nullity of the determinant of at most $k+1$ well-defined minors to decide whether $f$ has rank $n-k$. This result can significantly simplify the above mentioned arguments. 

\medskip
To state here the main result of this paper we introduce the notion $D_{m}(f)$ to denote the $(n-m)\times (n-m)$  matrix obtained from $D(f)$ after removing its first $m$ columns and last $m$ rows. Our main result is the following.

\begin{theorem}
	\label{conn}
	$\dim_q (\ker f)=\mu$ if and only if
	\begin{equation}
	|D_0(f)|=|D_1(f)|=\ldots=|D_{\mu-1}(f)|=0
	\end{equation}
	and $|D_{\mu}(f)|\neq 0$.
\end{theorem}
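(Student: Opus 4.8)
The plan is to derive Theorem \ref{conn} from the $q$-analogue of Result \ref{resoriginal}, namely Theorem \ref{resoriginal2}, applied to the pair $(f,x^{q^n}-x)$, and then to identify the scalar $q$-subresultants of this pair with the Dickson minors $|D_m(f)|$.

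First I would translate the kernel condition into a symbolic-gcd condition. As recalled in the introduction, the rank of $f$ equals $n-\dim_q(\ker f)$ and $\deg\gcd(f(x),x^{q^n}-x)=q^{\dim_q(\ker f)}$. By Result \ref{eukl} the greatest common symbolic right divisor of the two $q$-polynomials $f$ and $x^{q^n}-x$ coincides with their ordinary $\gcd$, so it is itself a $q$-polynomial, whose $q$-degree is exactly $\dim_q(\ker f)$. Hence $\dim_q(\ker f)=\mu$ if and only if the symbolic gcd of $f$ and $x^{q^n}-x$ has $q$-degree $\mu$, which by Theorem \ref{resoriginal2} holds precisely when the $0$-th through $(\mu-1)$-th scalar $q$-subresultants of $(f,x^{q^n}-x)$ vanish while the $\mu$-th does not.

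The decisive step is then to show that, for each $m$, the $m$-th scalar $q$-subresultant of $(f,x^{q^n}-x)$ agrees with $|D_m(f)|$ up to a nonzero factor, so that the two vanish together. The key simplification is that $x^{q^n}-x$ has only two nonzero coefficients ($b_0=-1$ and $b_n=1$), so the rows of the twisted Sylvester-type matrix coming from $g=x^{q^n}-x$ are extremely sparse: each is a single $1$ and a single $-1$ separated by zeros, encoding the relations $x^{q^{n+j}}\equiv x^{q^j}$. Eliminating the $f$-rows against these relations reduces the otherwise Toeplitz arrangement of the Frobenius-twisted coefficients $a_i^{q^j}$ modulo $x^{q^n}-x$, \emph{wrapping} it into exactly the cyclic pattern of $D(f)$. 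I would carry this out by writing the subresultant matrix explicitly, performing the block elimination against the two nonzero diagonals of $g$, and tracking the resulting column and row permutations together with the (nonzero) scalar factors they produce, so that the surviving $(n-m)\times(n-m)$ block becomes $D_m(f)$. The base case $m=0$ is the known fact that $D_0(f)=D(f)$ is nonsingular if and only if $f$ and $x^{q^n}-x$ are symbolically coprime, i.e.\ $|D_0(f)|$ and the $q$-resultant vanish together.

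The main obstacle I anticipate lies entirely in this decisive step: matching the Frobenius exponents $q^j$ on the entries after the wrap-around, controlling the sign and the nonzero scalar introduced by the eliminations and permutations, and verifying that the prescription \emph{remove the first $m$ columns and the last $m$ rows} is precisely what the passage from the $m$-th to the $(m+1)$-th $q$-subresultant matrix effects on the reduced block. Once the identification of $|D_m(f)|$ with the $m$-th scalar $q$-subresultant of $(f,x^{q^n}-x)$, up to a nonzero factor, is established for all $0\le m\le\mu$, the theorem follows at once by combining it with the reduction of the first paragraph.
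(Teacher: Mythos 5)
Your proposal follows essentially the same route as the paper: reduce to Theorem \ref{resoriginal2} applied to the pair $(f,x^{q^n}-x)$ (this is the paper's Corollary \ref{thm}), and then identify $|D_m(f)|$ with the $m$-th scalar $q$-subresultant by eliminating against the sparse rows coming from $x^{q^n}-x$ — which is exactly what the paper's Lemma \ref{lem} does via Schur's determinant identity, splitting into the cases $m\geq (n-1)/2$ and $m<(n-1)/2$ to handle the wrap-around. The step you flag as the main obstacle is indeed where all the work lies, and the paper's use of Result \ref{Schur} and Corollary \ref{cor} is precisely the bookkeeping device that carries out the block elimination and sign-tracking you describe.
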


Results in a similar direction have been obtained recently in \cite{teoremone} where for each $q$-polynomial $f$ of $q$-degree $k$, $k$ conditions were given, in terms of the coefficients of $f$, which are satisfied if and only if $f$ has rank $n-k$ (there is a hidden $(k+1)$-th condition here as well, namely the assumption that the coefficient of $x^{q^k}$ in $f$ is non-zero). Independently, in \cite{John} it was proved that the rank of $f$ is $n-m$ if and only if a certain $k\times k$ matrix has rank $k-m$. 
If $m=k$, then this result gives back the main result of \cite{teoremone}. 

\section{\texorpdfstring{Scalar $q$-subresultants}{Scalar q-subresultants}}
\label{qsubres}

Consider $f(x)=\sum_{i=0}^{k}a_i x^{q^i}$ and $g(x)=\sum_{i=0}^l b_i x^{q^i}$, two $q$-polynomials with coefficients in $\widebar{\F_q}$ such that $a_k b_l \neq 0$.
Put
\[q^{\mu}=\deg \gcd(f,g).\]
By Result \ref{eukl}, $\mu$ also equals the $q$-degree of the symbolic greatest common right divisor of $f$ and $g$. 

For $m\leq \min\{k,l\}$ we define the $(k+l-2m)\times (k+l-2m)$ matrix $R_{m,q}(f,g)$ as follows:
\[\begin{pmatrix}
a_k^{q^{l-m-1}} & a_{k-1}^{q^{l-m-1}}&  a_{k-2}^{q^{l-m-1}} & \ldots & a_{k+m-l+1}^{q^{l-m-1}} &\ldots & a_{2m-l+2}^{q^{l-m-1}} & a_{2m-l+1}^{q^{l-m-1}} \\
0 & a_k^{q^{l-m-2}} & a_{k-1}^{q^{l-m-2}}&\ldots& a_{k+m-l+2}^{q^{l-m-2}}&\ldots & a_{2m-l+3}^{q^{l-m-2}} & a_{2m-l+2}^{q^{l-m-2}} \\
\vdots & \vdots & \vdots & \vdots & \vdots & \vdots &\vdots &\vdots\\
0 & \ldots & 0 & \ldots & a_k & \ldots & a_{m+1}& a_{m} \\
b_l^{q^{k-m-1}} & b_{l-1}^{q^{k-m-1}}&  b_{l-2}^{q^{k-m-1}} & \ldots &\ldots &\ldots& b_{2m-k+2}^{q^{k-m-1}} & b_{2m-k+1}^{q^{k-m-1}} \\
0 & b_l^{q^{k-m-2}} & b_{l-1}^{q^{k-m-2}}&\ldots & \ldots & \ldots & b_{2m-k+3}^{q^{k-m-2}} & b_{2m-k+2}^{q^{k-m-2}} \\
\vdots & \vdots & \vdots & \vdots & \vdots &\vdots &\vdots &\vdots\\
0 & \ldots & 0 & b_l & \ldots &\ldots & b_{m+1} &b_{m} \\
\end{pmatrix}
.\]

Note that $R_{m+1,q}(f,g)$ is obtained from $R_{m,q}(f,g)$ by removing its first and last columns, and its first and $(l-m+1)$-th rows.

We state here the $q$-analogue of Result \ref{resoriginal}.

\begin{theorem}
		\label{resoriginal2}
	The $q$-degree of $\gcd(f,g)$ is $\mu$ if and only if $|R_{0,q}(f,g)|=\ldots=|R_{\mu-1,q}(f,g)|=0$ and $|R_{\mu,q}(f,g)|\neq 0$.
\end{theorem}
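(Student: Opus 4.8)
The plan is to reinterpret $R_{m,q}(f,g)$ as the matrix of a ``composition Sylvester map'' and to translate the vanishing of its determinant into the existence of a low $q$-degree relation between $f$ and $g$, following the classical argument recalled in the Appendix of \cite{Hetamas} but keeping careful track of the non-commutativity of composition. The starting point is the identity $u\circ f=\sum_j u_j\,f(x)^{q^j}$ for a $q$-polynomial $u=\sum_j u_j x^{q^j}$, together with the observation that the coefficient vector of the Frobenius twist $x^{q^j}\circ f=f(x)^{q^j}$ is exactly one of the $f$-rows of $R_{m,q}(f,g)$ (its entries are the $q^j$-th powers of the coefficients of $f$). Thus the rows of $R_{m,q}(f,g)$ are precisely the coefficient vectors of $x^{q^j}\circ f$ for $j=0,\dots,l-m-1$ and of $x^{q^j}\circ g$ for $j=0,\dots,k-m-1$, all expressed in the basis $x^{q^m},\dots,x^{q^{k+l-m-1}}$. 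Consequently $R_{m,q}(f,g)$ is the matrix of the map $\Phi_m\colon(u,v)\mapsto u\circ f+v\circ g$, where $q\text{-deg}\,u<l-m$ and $q\text{-deg}\,v<k-m$ and where the output is recorded in $q$-degrees $m,\dots,k+l-m-1$; both source and target have dimension $k+l-2m$. Since $u\circ f+v\circ g$ has $q$-degree at most $k+l-m-1$, this gives $|R_{m,q}(f,g)|=0$ if and only if there is a nonzero pair $(u,v)$ in these ranges with $q\text{-deg}(u\circ f+v\circ g)<m$.

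Next I would reduce, for $m\le\mu$, to genuine relations. Write $f=f_1\circ d$ and $g=g_1\circ d$, where $d$ is a greatest common right divisor with $q\text{-deg}\,d=\mu$, so $q\text{-deg}\,f_1=k-\mu$ and $q\text{-deg}\,g_1=l-\mu$; by Result~\ref{eukl} the element $d$ is also the ordinary $\gcd$, hence $f_1$ and $g_1$ are right coprime. For $m\le\mu$ the kernel of $\Phi_m$ contains only true relations $u\circ f+v\circ g=0$: indeed $u\circ f+v\circ g=(u\circ f_1+v\circ g_1)\circ d$, and if the inner factor $s:=u\circ f_1+v\circ g_1$ were nonzero the composite would have $q$-degree $\ge\mu\ge m$, contradicting $q\text{-deg}(u\circ f+v\circ g)<m$; therefore $s=0$.

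It then remains to solve the homogeneous equation $u\circ f_1+v\circ g_1=0$. This says $u\circ f_1=-v\circ g_1$ is a common left multiple of $f_1$ and $g_1$; since these are right coprime, their least common left multiple $M$ has $q$-degree $q\text{-deg}\,f_1+q\text{-deg}\,g_1=k+l-2\mu$, and every common left multiple is a left multiple of $M$. Writing $M=u_0\circ f_1=-v_0\circ g_1$ with $q\text{-deg}\,u_0=l-\mu$ and $q\text{-deg}\,v_0=k-\mu$, the whole solution set is $\{(w\circ u_0,\,w\circ v_0)\colon w\ \text{a }q\text{-polynomial}\}$. The constraint $q\text{-deg}(w\circ u_0)=q\text{-deg}\,w+(l-\mu)<l-m$ (equivalently $q\text{-deg}(w\circ v_0)<k-m$) is just $q\text{-deg}\,w<\mu-m$, so $\dim_{\overline{\F_q}}\ker\Phi_m=\mu-m$ for $0\le m\le\mu$. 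Hence $|R_{m,q}(f,g)|=0$ for $m<\mu$ while $|R_{\mu,q}(f,g)|\ne0$, which is the forward implication; the converse follows at once because these determinant conditions single out $\mu$ as the unique index at which the sequence of determinants first becomes nonzero, so any $\mu$ satisfying them must equal the true $q$-degree of $\gcd(f,g)$.

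The main obstacle is the third paragraph: the non-commutative bookkeeping of left versus right divisibility. In the commutative proof the analogous step is the elementary cancellation ``$g_1\mid u$ because $\gcd(f_1,g_1)=1$''; here this must be replaced by the structure theorem describing the solution module via the least common left multiple, and in particular by the $q$-degree identity $q\text{-deg}(\mathrm{llcm})=q\text{-deg}\,f_1+q\text{-deg}\,g_1$ valid for right-coprime linearized polynomials. Getting the Frobenius exponents in $R_{m,q}(f,g)$ to match the twists $x^{q^j}\circ f$ exactly, and consistently tracking on which side $d$, $u$ and $w$ are composed, is the place where the argument can most easily go wrong.
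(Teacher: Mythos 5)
Your argument is correct, but it establishes the key fact (that $R_{m,q}(f,g)$ has nullity $\mu-m$ for $m\le\mu$, i.e.\ Proposition~\ref{prop}) by a genuinely different route. You read the rows of $R_{m,q}(f,g)$ as the coefficient vectors of the Frobenius twists $x^{q^j}\circ f$ and $x^{q^j}\circ g$ truncated to $q$-degrees $\ge m$, identify the kernel of the resulting square map with the homogeneous relations $u\circ f+v\circ g=0$ (using $m\le\mu$ to rule out low-degree remainders), and then parametrize that solution module via Ore's least-common-left-multiple theory: for right-coprime $f_1,g_1$ the lclm has $q$-degree $(k-\mu)+(l-\mu)$ and every common left multiple is a left multiple of it, so the kernel is $\{(w\circ u_0,w\circ v_0)\}$ with $q\text{-deg}\,w<\mu-m$. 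The paper instead normalizes the leading coefficients of $c$ and $d$, shows the solution set of $d\circ f-c\circ g=0$ is a $(\mu-m)$-dimensional \emph{affine} space by a root-space argument ($d$ must vanish on $f_1(\ker g_1)$, whose $\F_q$-dimension is computed from $\ker f_1\cap\ker g_1=\{0\}$, and the syzygy $d_1\circ f_1=c_1\circ g_1$ is obtained as a polynomial of too-small $q$-degree vanishing on $\ker f_1\oplus\ker g_1$), and then matches this affine system to $\mathbf{v}R_{m,q}(f,g)=-\mathbf{b}$ by explicit index bookkeeping. The trade-off: the paper's proof is self-contained (it only invokes Result~\ref{eukl}) but must treat inseparable $f,g$ by a separate reduction, factoring out $x^{q^{k_1}}$ and taking $q^{-k_1}$-th roots of coefficients; your proof is shorter, needs no separability case split, and avoids the coefficient computations, but it outsources the central step to the lclm structure theorem for skew polynomial rings (existence, the degree identity, and the principal-ideal property of the module of common left multiples), a nontrivial input from \cite{Ore2} that the paper deliberately avoids in order to give a direct proof modelled on the classical Result~\ref{resoriginal}. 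You correctly flag this dependence yourself; if those Ore facts are taken as known, your argument is complete.
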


We prove this result directly by following the proof of the classical Result \ref{resoriginal}. Theorem \ref{resoriginal2} will easily follow from Proposition \ref{prop}.

\begin{proposition}
	Recall $q^{\mu}=\deg \gcd(f,g)$ and let $m\leq \mu$. 
	Let $c(x)=\sum_{i=0}^{k-m} c_i x^{q^i}$ and $d(x)=\sum_{i=0}^{l-m} d_i x^{q^i}$ be $q$-polynomials over $\widebar{\F_q}$ with $c_{k-m}=a_k^{q^{l-m}}$,  $d_{l-m}=b_l^{q^{k-m}}$, and their other coefficients are considered as unknowns. 
	Then the set of solutions for these coefficients such that  
	\begin{equation}
	\label{eq}
	d \circ f - c \circ g=0
	\end{equation}	
form a $(\mu-m)$-dimensional affine $\widebar{\F_q}$-space.
\end{proposition}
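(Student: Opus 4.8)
The plan is to work in the ring $R$ of $q$-polynomials over $\widebar{\F_q}$ under composition. Since the Frobenius $x\mapsto x^q$ is an automorphism of $\widebar{\F_q}$, $R$ is a two-sided Euclidean domain in the sense of Ore \cite{Ore,Ore2}; in particular it has no zero divisors and admits least common left multiples whose $q$-degree is governed by coprimality. First I would strip off the common part. Let $h$ be a greatest common symbolic right divisor of $f$ and $g$, so $f=f_1\circ h$ and $g=g_1\circ h$ with $h$ of $q$-degree $\mu$, $f_1$ of $q$-degree $k-\mu$ and $g_1$ of $q$-degree $l-\mu$; by the maximality of $h$ together with Result \ref{eukl}, $f_1$ and $g_1$ are right-coprime. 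Because $R$ has no zero divisors, $h$ cancels on the right, so \eqref{eq} is equivalent to the reduced equation $d\circ f_1=c\circ g_1$.

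Next I would invoke the least common left multiple. As $f_1$ and $g_1$ are right-coprime, $L:=\mathrm{lclm}(f_1,g_1)$ has $q$-degree $(k-\mu)+(l-\mu)=k+l-2\mu$, and we may write $L=d_0\circ f_1=c_0\circ g_1$ with $d_0$ of $q$-degree $l-\mu$ and $c_0$ of $q$-degree $k-\mu$, each having nonzero leading coefficient. Any pair $(c,d)$ solving the reduced equation yields a common left multiple $P:=d\circ f_1=c\circ g_1$, which is necessarily a left multiple $P=e\circ L$ of $L$; cancelling $f_1$ and $g_1$ on the right gives $d=e\circ d_0$ and $c=e\circ c_0$, and conversely every $e$ produces a solution. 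Comparing $q$-degrees shows that the constraints that $c$ and $d$ have $q$-degree at most $k-m$ and $l-m$ force $\deg e\le\mu-m$. Since $e\mapsto(e\circ c_0,e\circ d_0)$ is an injective $\widebar{\F_q}$-linear map (injectivity again from $R$ being a domain), the solutions of the reduced equation with these degree bounds, leading coefficients still free, form a $\widebar{\F_q}$-linear space $V$ parametrized by $e=\sum_{i=0}^{\mu-m}e_ix^{q^i}$, hence of dimension $\mu-m+1$.

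Finally I would impose the normalizations and check that together they cost exactly one dimension. Reading off the leading coefficient of $d=e\circ d_0$ gives $d_{l-m}=e_{\mu-m}\,\delta^{q^{\mu-m}}$, where $\delta\neq 0$ is the leading coefficient of $d_0$; thus the prescription $d_{l-m}=b_l^{q^{k-m}}$ is a single inhomogeneous linear condition on the top parameter $e_{\mu-m}$, which is solvable and cuts the dimension of $V$ down by exactly one, to $\mu-m$. It remains to see that the second prescription $c_{k-m}=a_k^{q^{l-m}}$ is then automatic and costs nothing: every point of $V$ satisfies $d\circ f=c\circ g$ (compose the reduced equation with $h$ on the right), so comparing the coefficients of $x^{q^{k+l-m}}$ on both sides yields the identity $d_{l-m}\,a_k^{q^{l-m}}=c_{k-m}\,b_l^{q^{k-m}}$, whence $d_{l-m}=b_l^{q^{k-m}}$ forces $c_{k-m}=a_k^{q^{l-m}}$. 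Therefore the full solution set of \eqref{eq} subject to the prescribed leading coefficients is exactly this affine subspace, of $\widebar{\F_q}$-dimension $\mu-m$, as claimed.

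The main obstacle is concentrated in the second paragraph: establishing the $q$-degree formula $\deg_q\mathrm{lclm}(f_1,g_1)=\deg_q f_1+\deg_q g_1$ for right-coprime $f_1,g_1$, and the clean parametrization $(c,d)=(e\circ c_0,e\circ d_0)$ of all common left multiples. Both rest on the structure of $R$ as a domain with a two-sided division algorithm, so they are available from Ore's theory; by contrast the degree and leading-coefficient comparisons are routine once this framework is in place.
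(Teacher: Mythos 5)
Your argument is correct, and it reaches the same structural skeleton as the paper's proof --- reduce modulo the greatest common right divisor and parametrize the solutions by a free cofactor $e$ of $q$-degree $\mu-m$ --- but it gets there by a genuinely different route. The paper never invokes the least common left multiple as such: it first assumes $f$ and $g$ have only simple roots, works with the root spaces over $\widebar{\F_q}$, and \emph{constructs} your $c_0,d_0$ by hand as the annihilator polynomials of the subspaces $g_1(\ker f_1)$ and $f_1(\ker g_1)$, proving $d_0\circ f_1=c_0\circ g_1$ by a degree-versus-kernel-dimension count; it then needs a separate second case to handle inseparable $f,g$ by stripping off powers of $x^q$ and taking $q^{-k_1}$-th roots of the coefficients. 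Your version replaces all of that with Ore's general theory of the lclm in the two-sided Euclidean domain $\widebar{\F_q}[t;\sigma]$ (existence of $L$ generating $R\circ f_1\cap R\circ g_1$, and the degree formula $\deg_q L=\deg_q f_1+\deg_q g_1-\deg_q\mathrm{gcrd}(f_1,g_1)$, cf.\ \cite{Ore2}), which makes the argument uniform --- no separable/inseparable case split --- and shorter; your observation that the normalization of $c_{k-m}$ is forced by that of $d_{l-m}$ via the top coefficient of \eqref{eq} is also cleaner than the paper's handling. The trade-off is that the paper's proof is self-contained given only Result \ref{eukl}, whereas yours leans on the lclm degree formula as a black box; since that formula is genuinely in Ore's work (and you correctly identify it as the load-bearing step), this is a legitimate citation rather than a gap, but in the paper's write-up it would need to be stated and referenced explicitly.
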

\begin{proof}
	First assume that $f$ and $g$ have only simple roots.
	
	Let $r$ be the greatest common monic symbolic right divisor of $f$ and $g$ and suppose that \eqref{eq} holds for some $c$ and $d$. Then 
	$f=f_1 \circ r$ and $g=g_1 \circ r$ and \eqref{eq} yields $d \circ f_1 = c \circ g_1$, thus $d$ is zero on $f_1(\ker g_1)$ (in this proof the kernel is always taken over $\widebar{\F_q}$) and $c$ is zero on $g_1(\ker f_1)$.
	Since the greatest common symbolic right divisor of $f_1$ and $g_1$ is the identity map, it follows that $\gcd(f_1,g_1)=x$ and hence $\ker f_1 \cap \ker g_1=\{0\}$. 
	Thus $\dim_q f_1(\ker g_1)=\dim_q \ker g_1 = l-\mu$ and similarly $\dim_q g_1(\ker f_1)=k-\mu$. It follows that the unique $q$-polynomial $d_1$ of $q$-degree $l-\mu$ and with leading coefficient $b_l^{q^{k-\mu}}$ which vanishes on $f_1(\ker g_1)$ is a divisor of $d$. By  Result \ref{eukl} $\gcd(d,d_1)=d_1$ is also a symbolic right divisor of $d$, i.e. $d=d_2 \circ d_1$, for some monic $d_2$ with $q$-degree $(\mu-m)$. Similarly, the unique $q$-polynomial $c_1$ of $q$-degree $k-\mu$ and with leading coefficient $a_k^{q^{l-\mu}}$ which vanishes on $g_1(\ker f_1)$ is a symbolic right divisor of $c$, i.e. $c= c_2 \circ c_1$, for some monic $c_2$ with $q$-degree $(\mu-m)$.
	
	Note that 
   \[d_1 \circ f_1-c_1 \circ g_1\]
    has $q$-degree $k+l-2\mu-1$ (the coefficient of $x^{q^{k+l-2\mu}}$ vanishes because of the assumptions on the leading coefficients of $c$ and $d$) and it vanishes on $\ker f_1 \oplus \ker g_1$. Thus it is the zero polynomial.
    
    Then 
    \[c_2 \circ c_1 \circ g_1 =c\circ g_1 = d\circ f_1 = d_2 \circ d_1 \circ f_1= d_2 \circ c_1 \circ g_1\]
    and hence $c_2=d_2$. On the other hand, if $c_2=d_2$, then 
	we clearly have a solution since \eqref{eq} becomes $d_2 \circ (d_1 \circ f_1-c_1 \circ g_1) \circ r$ with the zero polynomial in the middle.
	 
	Since we can choose the first $(\mu-m)$ coefficients of $d_2(x)=\sum_{i=0}^{\mu-m}\hat{d}_ix^{q^i}$ arbitrarily, the assertion follows. More precisely, if
	$d_1(x)=\sum_{j=0}^{\mu-m}\bar{d_j}x^{q^j}$ with $\bar{d}_{l-\mu}=b_l^{q^{k-\mu}}$ and with coefficients out of range defined as $0$, then $d(x)$ is of the form
	\[\sum_{i=0}^{k-m}\sum_{j=0}^i \hat{d}_{i-j}\bar{d_j}^{q^{i-j}}x^{q^i},\]
	with $\hat{d}_k\in \widebar{\F_q}$ for $0\leq k \leq \mu-m-1$, $\hat{d}_{\mu-m}=1$ and $\hat{d}_l=0$ for $l>\mu-m$.
	These polynomials form a $(\mu-m)$-dimensional affine $\widebar{\F_q}$-space and as we have seen, any such $d(x)$ uniquely defines a $c(x)$ for which \eqref{eq} holds.

	Now consider the case when $f$ and $g$ may have multiple roots.
	Let $f=x^{q^{k_1}}\circ \tilde f$ and $g=x^{q^{l_1}}\circ \tilde g$ where 
	$\tilde f$ and $\tilde g$ have only simple roots. W.l.o.g. assume $l_1 \leq k_1$. We want to find the dimension of the solutions of
	\[d \circ x^{q^{k_1}}\circ \tilde f = c \circ x^{q^{l_1}}\circ \tilde g,\]
	under the given assumptions on the degrees and leading coefficients of $c$ and $d$. Clearly, the multiplicities of the roots of the left hand side and the right hand side have to coincide and hence $c=c' \circ x^{q^{k_1-l_1}}$. 
	Let $\tilde d$ and $\tilde c'$ denote the $q$-polynomials whose coefficients are the $q^{-k_1}$-th roots of the coefficients of $d$ and $c$, respectively. 
	Then the solutions of the previous system correspond to the solutions of \[x^{q^{k_1}}\circ \tilde d \circ \tilde f = x^{q^{k_1}}\circ \tilde c' \circ \tilde g\] and hence to those of 
\[\tilde d \circ \tilde f = \tilde c' \circ \tilde g,\]
where the $q$-degree of $\tilde d$ is $(l-l_1)-(m-l_1)$ and the $q$-degree of $\tilde c'$ is $(k-k_1)-(m-l_1)$. 
The roots of the $q$-polynomials $\tilde f$ and $\tilde g$ are simple, thus we can apply the first part of this proof for these polynomials. 
The leading coefficients of $\tilde d$ and $\tilde c'$ are $b_l^{q^{k-m-k_1}}$ and $a_k^{q^{l-m-k_1}}$, respectively; the leading coefficients of $\tilde f$ and $\tilde g$ are $a_k^{q^{-k_1}}$ and $b_l^{q^{-l_1}}$, respectively. Since $b_l^{q^{k-m-k_1}}=b_l^{q^{-l_1}\deg \tilde c'}$ and 
$a_k^{q^{l-m-k_1}}=a_k^{q^{-k_1}\deg \tilde d}$, the conditions on the leading coefficients also hold. 
Note that the $q$-degree of $\gcd (\tilde f, \tilde g)$ is $\mu-l_1$. 
Then the  dimension of the solutions of this system is $(\mu-l_1)-(m-l_1)=\mu-m$.

\end{proof}

\begin{proposition}
	\label{prop}
	Suppose $m \leq \mu$. Then the nullity of the matrix $R_{m,q}(f,g)$ is $\mu-m$.
\end{proposition}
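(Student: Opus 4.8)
The plan is to identify the (left) null space of $R_{m,q}(f,g)$ with a space of pairs of $q$-polynomials, and then to match it with the solution space furnished by the preceding proposition. Since $R_{m,q}(f,g)$ is square, its nullity equals the dimension of its left null space, so it suffices to analyse left null vectors. Write such a vector as $(\tilde u_0,\ldots,\tilde u_{l-m-1},\tilde w_0,\ldots,\tilde w_{k-m-1})$, pairing the first $l-m$ entries with the $a$-rows and the last $k-m$ with the $b$-rows, and set $\tilde u=\sum_{j=0}^{l-m-1}\tilde u_j x^{q^j}$ and $\tilde w=\sum_{j=0}^{k-m-1}\tilde w_j x^{q^j}$. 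First I would expand the composition coefficients, using that the coefficient of $x^{q^s}$ in $\tilde u\circ f$ is $\sum_j \tilde u_j a_{s-j}^{q^j}$ and likewise for $\tilde w\circ g$. Comparing with the rows of $R_{m,q}(f,g)$ — whose $r$-th $a$-row carries the Frobenius twist $q^{l-m-1-r}$ and whose $r$-th $b$-row carries $q^{k-m-1-r}$, so that the unknown indexed by twist $q^j$ plays the role of $\tilde u_j$ (resp.\ $\tilde w_j$) — one checks that the $p$-th column records exactly the coefficient of $x^{q^{(k+l-m-1)-p}}$. As $p$ runs over $0,\ldots,k+l-2m-1$, the exponent $s=(k+l-m-1)-p$ runs over the middle range $m\le s\le k+l-m-1$. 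Hence $\vec u^{\,T}R_{m,q}(f,g)=0$ is equivalent to the vanishing of the coefficients of $x^{q^m},\ldots,x^{q^{k+l-m-1}}$ in $\tilde u\circ f-\tilde w\circ g$, i.e.\ to $\deg_q(\tilde u\circ f-\tilde w\circ g)<m$ (absorbing a sign into $\tilde w$).

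Next I would compare this with the preceding proposition. Let $V$ be the linear space of pairs $(\delta,\gamma)$ with $\deg_q\delta\le l-m-1$, $\deg_q\gamma\le k-m-1$ and $\delta\circ f-\gamma\circ g=0$. This is precisely the space of differences of two solutions of \eqref{eq}, so by that proposition $\dim V=\mu-m$. Since a full equality $\delta\circ f-\gamma\circ g=0$ in particular makes the middle coefficients vanish, the identification above gives $V\subseteq\{(\tilde u,\tilde w):\vec u^{\,T}R_{m,q}(f,g)=0\}$. It therefore remains to establish the reverse inclusion, and this is the crux of the argument.

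For this, suppose $(\tilde u,\tilde w)$ lies in the left null space, so that $h:=\tilde u\circ f-\tilde w\circ g$ satisfies $\deg_q h<m$. Writing $r$ for the greatest common symbolic right divisor of $f$ and $g$, with $f=f_1\circ r$, $g=g_1\circ r$ (admissible by Result \ref{eukl}, and $\deg_q r=\mu$), associativity and right-distributivity of composition yield $h=(\tilde u\circ f_1-\tilde w\circ g_1)\circ r$. If the inner factor $P:=\tilde u\circ f_1-\tilde w\circ g_1$ were nonzero, additivity of the $q$-degree under composition would force $\deg_q h=\deg_q P+\mu\ge\mu\ge m$, contradicting $\deg_q h<m$; this is exactly where the hypothesis $m\le\mu$ enters. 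Hence $P=0$, so $h=0$ and $(\tilde u,\tilde w)\in V$. Thus the left null space equals $V$, and the nullity of $R_{m,q}(f,g)$ is $\dim V=\mu-m$. I expect the only genuinely delicate point to be the index-and-twist bookkeeping of the first paragraph; once the null condition is correctly translated into the remainder inequality $\deg_q(\tilde u\circ f-\tilde w\circ g)<m$, the degree argument closes the gap cleanly.
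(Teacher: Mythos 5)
Your proof is correct and follows essentially the same route as the paper: translate the columns of $R_{m,q}(f,g)$ into the coefficients of $x^{q^m},\ldots,x^{q^{k+l-m-1}}$ in a composition difference, use divisibility by $r=\gcd(f,g)$ of $q$-degree $\mu\ge m$ to upgrade ``degree $<m$'' to ``identically zero'', and then invoke the dimension count of the preceding proposition. The only (harmless) difference is packaging: the paper works with the inhomogeneous system ${\bf v}R_{m,q}(f,g)=-{\bf b}$ coming from the fixed leading coefficients and identifies its affine solution set with that of \eqref{eq}, whereas you work with the homogeneous system and identify the left null space with the direction space of that affine set.
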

\begin{proof}
Let $f,g,c,d$ be defined as before, then
\[d\circ f-c\circ g=\sum_{i=0}^{l-m} d_i \sum_{j=0}^{k}a_j^{q^i} x^{q^{j+i}}-
\sum_{i=0}^{k-m} c_i \sum_{j=0}^l b_j^{q^i} x^{q^{j+i}}=\]
\[\sum_{i=0}^{k+l-m}\left(\sum_{j=0}^{i}d_{i-j}a_j^{q^{i-j}}-c_{i-j}b_j^{q^{i-j}}\right)x^{q^i}.\]

The $q$-degree of $r:= \gcd(f,g)$ is
$\mu \geq m$ and $r \mid d \circ f - c \circ g$, thus $d$ and $c$ form a solution to
$d \circ f - c \circ g=0$ if
and only if the $q$-degree of $d \circ f - c \circ g$ is less than $m$. In another words, we only have to concentrate on the coefficients of terms with $q$-degree 
$i\in \{m,m+1,\ldots, k+l-m\}$ in $d \circ f - c \circ g$. 

Note that the coefficient of $q^{k+l-m}$ is $d_{l-m}a_k^{q^{l-m}}-c_{k-m}b_l^{q^{k-m}}$ (coefficients out of range are considered to be $0$), which is $0$ because of our assumptions on $c$ and $d$. 
Now let 
\[{\bf v}=(d_{l-m-1},d_{l-m-2},\ldots,d_0,-c_{k-m-1},-c_{k-m-2},\ldots,-c_0)\]
and 
\[{\bf b}=(b_l^{q^{k-m}}a_{k-1}^{q^{l-m}}-a_k^{q^{l-m}}b_{l-1}^{q^{k-m}},\ldots,b_l^{q^{k-m}}a_{2m-l}^{q^{l-m}}-a_k^{q^{l-m}}b_{2m-k}^{q^{k-m}}).\]
We claim that
\begin{equation}
\label{final}
{\bf v}R_{m,q}(f,g)=-{\bf b}
\end{equation}
holds if and only if 
\begin{equation}
\label{st}
\sum_{j=0}^{i}d_{i-j}a_j^{q^{i-j}}-c_{i-j}b_j^{q^{i-j}}=0
\end{equation}
for all $m \leq i \leq k+l-m-1$. To see this we show that the $(k+l-2m-t)$-th coordinates in the vectors at the left and right hand side of \eqref{final} coincide if and only if \eqref{st} holds with $i=m+t$. 
Indeed, in
\begin{equation}
\label{st2}
\sum_{j=0}^{m+t}d_{m+t-j}a_j^{q^{m+t-j}}-c_{m+t-j}b_j^{q^{m+t-j}}
\end{equation}
$d_{m+t-j}\neq 0$ only if $j\in\{m+t,m+t-1,\ldots,2m+t-l\}$ and 
$c_{m+t-j}\neq 0$ only if $j \in  \{m+t,m+t-1,\ldots,2m+t-k\}$. Thus, after changing indices in the summation, \eqref{st2} equals 
\begin{equation}
\label{st3}
\sum_{j=0}^{l-m}d_{l-m-j}a_{2m+t-l+j}^{q^{l-m-j}}-
\sum_{j=0}^{k-m}c_{k-m-j}b_{2m+t-k+j}^{q^{k-m-j}}.
\end{equation}
Since $d_{l-m}=b_l^{q^{k-m}}$ and $c_{k-m}=a_k^{q^{l-m}}$, the $(k+l-2m-t)$-th coordinates on the left and right hand side of \eqref{final} coincide if and only if 
\[\sum_{j=0}^{l-m-1}d_{l-m-1-j}a_{2m-l+1+t+j}^{q^{l-m-1-j}}
-\sum_{s=0}^{k-m-1}c_{k-m-1-s}b_{2m-k+1+t+j}^{q^{k-m-1-s}}=\]
\[d_{l-m}a_{2m-l+t}^{q^{l-m}}-c_{k-m}b_{2m-k+t}^{q^{k-m}},\]
and this happens if and only if \eqref{st3} equals zero.

Thus the dimension of the kernel of the $\widebar{\F_q}$-linear transformation of $\widebar{\F_q}^{k+l-2m}$ defined by ${\bf x}\mapsto{{\bf x}R_{m,q}(f,g)}$ is the same as the dimension of the set of solutions of \eqref{eq} and this finishes the proof. 
\end{proof}

\begin{corollary}
\label{thm}
Let $f$ be a $q$-polynomial over $\F_{q^n}$ and put $g(x)=x^{q^n}-x$. 
Then $\dim_q (\ker f)=\mu$ if and only if
\begin{equation}
|R_{0,q}(f,g)|=|R_{1,q}(f,g)|=\ldots=|R_{\mu-1,q}(f,g)|=0
\end{equation}
and $|R_{\mu,q}(f,g)|\neq 0$.
\end{corollary}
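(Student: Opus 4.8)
The plan is to deduce Corollary \ref{thm} directly from Theorem \ref{resoriginal2} by specializing $g$ to the particular $q$-polynomial $g(x)=x^{q^n}-x$ and translating the statement about $\gcd$-degrees into a statement about the nullity of $f$. The key conceptual input, already recorded in the introduction, is the dictionary between $q$-polynomials modulo $x^{q^n}-x$ and $\F_q$-linear transformations of $\F_{q^n}$: the set of roots of a $q$-polynomial is an $\F_q$-subspace whose dimension equals the dimension of the kernel of the associated linear map. In particular, for $g(x)=x^{q^n}-x$ the roots are exactly all of $\F_{q^n}$, so $g$ has $q^n$ simple roots and $q$-degree $n$.

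First I would identify $\gcd(f,g)$ with the common roots of $f$ and $g$. Since every root of $g$ ranges over all of $\F_{q^n}$, the common roots of $f$ and $g$ are precisely the roots of $f$ that lie in $\F_{q^n}$, i.e. the kernel of the $\F_q$-linear transformation defined by $f$. Because $g$ has only simple roots, $\gcd(f,g)$ is the monic $q$-polynomial whose root set is exactly $\ker f$, and therefore $\deg\gcd(f,g)=q^{\dim_q(\ker f)}$. Writing $q^{\mu}=\deg\gcd(f,g)$ as in Section \ref{qsubres}, this gives $\mu=\dim_q(\ker f)$, which is the precise statement we need in order to interchange the two notions.

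With this identification in hand, the conclusion is immediate: Theorem \ref{resoriginal2} asserts that the $q$-degree of $\gcd(f,g)$ equals $\mu$ if and only if $|R_{0,q}(f,g)|=\dots=|R_{\mu-1,q}(f,g)|=0$ and $|R_{\mu,q}(f,g)|\neq 0$, and substituting $\mu=\dim_q(\ker f)$ yields exactly the claimed equivalence. One mild bookkeeping point to check is that the matrices $R_{m,q}(f,g)$ are well defined for the relevant range of $m$, namely $m\leq\min\{k,l\}=\min\{k,n\}$; since $\mu\le k\le n$ and the theorem is only invoked for $m\le\mu$, this causes no difficulty. I would also note that $g$ is monic with leading coefficient $b_n=1\ne 0$, so the hypothesis $a_k b_l\neq 0$ of Section \ref{qsubres} is satisfied.

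The only genuine obstacle — and it is a minor one — is making the passage $\deg\gcd(f,g)=q^{\mu}\Leftrightarrow \mu=\dim_q(\ker f)$ fully rigorous, in particular justifying that $\gcd(f,g)$ (an ordinary polynomial gcd) has root set exactly equal to the $\F_q$-subspace $\ker f\subseteq\F_{q^n}$ with each root simple. This rests on $g=x^{q^n}-x$ being separable together with the fact, recorded before Result \ref{eukl}, that the roots of a $q$-polynomial form an $\F_q$-subspace and that ordinary gcd coincides with the symbolic greatest common right divisor (Result \ref{eukl}); once these are invoked, the remainder is a direct substitution into Theorem \ref{resoriginal2} and requires no further computation.
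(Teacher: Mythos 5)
Your proposal is correct and follows exactly the route the paper intends: the corollary is an immediate specialization of Theorem \ref{resoriginal2} to $g(x)=x^{q^n}-x$, using the identification $\deg\gcd(f,x^{q^n}-x)=q^{\dim_q(\ker f)}$ already recorded in the introduction (the common roots of $f$ and the separable polynomial $x^{q^n}-x$ are precisely $\ker f$, each simple). Your bookkeeping remarks on the range of $m$ and the leading coefficients are fine and require no further comment.
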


As an illustration, the $(n+k)\times (n+k)$ matrix $R_{0,q}(f,g)$ in the particular case when $g(x)=x^{q^n}-x$ and $f(x)=\sum_{i=0}^k a_i x^{q^i}$ has the following form:

\[\begin{pmatrix}
a_k^{q^{n-1}} & a_{k-1}^{q^{n-1}} & \ldots  & a_0^{q^{n-1}} & 0 & \ldots &0 & 0 & \ldots & 0\\
0 & a_k^{q^{n-2}} & \ldots & a_1^{q^{n-2}} & a_0^{q^{n-2}} & \ldots & 0 & 0& \ldots & 0 \\
\vdots & \vdots & \vdots & \vdots & \vdots & \vdots & \vdots & \vdots & \vdots &\vdots\\
0 & 0 &  \ldots &  0 & 0 & \ldots & a_k & a_{k-1} &\ldots & a_0\\
1 & 0 & \ldots  & 0 & 0 & \ldots & 0 & -1 &\ldots &0\\
0 & 1 & \ldots  & 0 & 0 & \ldots & 0 & 0& \ldots & 0 \\
\vdots & \vdots & \vdots & \vdots & \vdots & \vdots & \vdots & \vdots & \vdots & \vdots \\
0 & 0 & \ldots &  0 & 0 & \ldots & 0 & 0 & \ldots &-1 \\
\end{pmatrix}.\]
The matrix $R_{m,q}(f,g)$ can be obtained from $R_{0,q}(f,g)$ by removing its first and last $m$ columns and its first $m$ rows together with the $(n+1)$-th, $(n+2)$-th, \ldots, $(n+m)$-th rows.

Let  $\tilde{f}(x)=\sum_{i=0}^{k-1} a_i x^{q^i}$ and $g(x)=x^{q^n}-x$. If we substitute $a_k=0$ in $R_{m,q}(f,g)$, then its determinant equals either $|R_{m,q}(\tilde{f},g)|$ or $-|R_{m,q}(\tilde{f},g)|$. This argument can be iterated and hence one can use Corollary  \ref{thm} even if the $q$-degree of $f$ is not known, by considering the $(2n-1-2m)\times (2n-1-2m)$ $m$-th scalar $q$-subresultants of $\sum_{i=0}^{n-1} a_i x^{q^i}$ and $g(x)$. 

\section{A connection with Dickson matrices}
\label{Dickson}

In this section we prove Theorem \ref{conn} but before that we need some preparation.

\begin{result}[Schur's determinant identity, \cite{Schur}]
	\label{Schur}
	Consider the square matrix 
	\[M:=
	\begin{pmatrix}
	X & Y \\
	Z & W \\
	\end{pmatrix},
	\]
	where $W$ is also square and invertible. Then $\det(M)=\det(W)\det(X-YW^{-1}Z)$.
\end{result}

\begin{corollary}
\label{cor}
	Consider the square matrices
	\[M:=
	\begin{pmatrix}
	A & B & C \\
	I_l & O & -I_l \\
	\end{pmatrix},
	\]
	\[N:=
	\begin{pmatrix}
	B & A+C \\
	\end{pmatrix},
	\]
	where $A$ and $C$ are $k\times l$ matrices, $B$ is $k \times (k-l)$, $I_l$ denotes the $l\times l$ identity matrix and $O$ is the $l\times (k-l)$ zero matrix. 
	Then $\det(M)=(-1)^{l(k-l+1)}\det(N)$.
\end{corollary}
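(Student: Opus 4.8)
The plan is to reduce the identity to Schur's determinant identity (Result~\ref{Schur}), choosing the block partition of $M$ whose lower-right corner is the invertible block $-I_l$. Concretely, I would write
\[
M=\begin{pmatrix} X & Y \\ Z & W\end{pmatrix},\qquad
X=(A\mid B),\quad Y=C,\quad Z=(I_l\mid O),\quad W=-I_l,
\]
where $X$ is the $k\times k$ block consisting of the first $k$ columns of the top block row, $Y=C$ is $k\times l$, $Z=(I_l\mid O)$ is $l\times k$, and $W=-I_l$ is $l\times l$, invertible with $W^{-1}=-I_l$. All block sizes are consistent with the hypotheses, and $M$ is $(k+l)\times(k+l)$ as required.

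Next I would compute the Schur complement. Since $\det W=(-1)^l$ and
\[
Y W^{-1} Z = C(-I_l)(I_l\mid O) = -(C\mid O),
\]
the two minus signs cancel, so $X-YW^{-1}Z=(A\mid B)+(C\mid O)=(A+C\mid B)$. Result~\ref{Schur} then yields $\det M=(-1)^l\det(A+C\mid B)$.

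Finally I would relate $(A+C\mid B)$ to $N=(B\mid A+C)$ by the block column permutation that moves the $l$ columns of $A+C$ past the $k-l$ columns of $B$. Swapping two adjacent column blocks of sizes $l$ and $k-l$ has sign $(-1)^{l(k-l)}$, so $\det(A+C\mid B)=(-1)^{l(k-l)}\det N$. Combining the two signs gives
\[
\det M=(-1)^{l+l(k-l)}\det N=(-1)^{l(k-l+1)}\det N,
\]
as claimed. The only real obstacle is the sign bookkeeping: one must record $W^{-1}=-I_l$ correctly (so that $YW^{-1}Z$ produces $A+C$ and not $A-C$) and compute the sign $(-1)^{l(k-l)}$ of the block transposition; the rest is a direct substitution into Schur's identity.
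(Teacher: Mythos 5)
Your proposal is correct and follows essentially the same route as the paper: the same block partition $X=(A\mid B)$, $Y=C$, $Z=(I_l\mid O)$, $W=-I_l$ plugged into Schur's identity, followed by the same $l(k-l)$ column swaps to pass from $(A+C\mid B)$ to $N$. The sign bookkeeping is also handled correctly.
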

\begin{proof}
	Result \ref{Schur} with $X= \begin{pmatrix}
	A & B \\
	\end{pmatrix}$, $Y=C$, $Z=\begin{pmatrix}
	I_l & O \\
	\end{pmatrix}$ and $W=-I_l$ gives
	\[\det(M)=\det(-I_l)\det\left(\begin{pmatrix}
	A & B \\
	\end{pmatrix}+C\begin{pmatrix}
	I_l & O \\
	\end{pmatrix}\right)=(-1)^l\det\begin{pmatrix}
	A+C & B \\
	\end{pmatrix}.\]
	The result follows since $N$ can be obtained from $\begin{pmatrix}
	A+C & B \\
	\end{pmatrix}$ by $l(k-l)$ column changes.
\end{proof}

Let us introduce the abbreviation 
\[R_{m}(f):=R_{m,q}(f,g),\] 
where $g(x)=x^{q^n}-x$ and $f(x)=\sum_{i=0}^{n-1}a_ix^{q^i}$ for some $a_i\in \F_{q^n}$. 

\begin{lemma}
\label{lem}
$|D_m(f)|=|R_{m}(f)|$.
\end{lemma}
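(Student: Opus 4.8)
The plan is to read off the block structure of $R_m(f)$ and reduce its determinant to that of an $(n-m)\times(n-m)$ matrix via Corollary \ref{cor}, then match that matrix to $D_m(f)$ up to a determinant-preserving symmetry. First I would record the shape of $R_m(f)=R_{m,q}(f,x^{q^n}-x)$. Writing $r$ for the index of an $f$-row, that row is the Frobenius-twisted staircase whose entry in the $c$-th column is $a_{(r+n-1)-c}^{q^{\,n-r}}$ (zero when the subscript leaves $\{0,\dots,n-1\}$). In the $g$-block every row carries a single $1$ (these $1$'s form an identity block in the leftmost $n-1-m$ columns, because $b_n=1$) and a single $-1$ exactly $n$ columns to its right (because $b_0=-1$); since the matrix has width $2n-1-2m$, the matching $-1$ falls off the right end for the last $m$ of these rows. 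Thus the $g$-block is $(I_{\,n-1-2m}\mid O\mid -I_{\,n-1-2m})$ together with $m$ extra \emph{lone} rows each containing only a $1$.

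Next I would strip off the $m$ lone rows. Each has a single nonzero entry equal to $1$ in one of the $m$ consecutive columns just left of the middle, so a Laplace expansion along these rows deletes them and their columns, contributes a sign $\pm 1$, and leaves a matrix $R'$ of size $2n-1-3m$ whose $g$-block is now exactly $(I_{\,n-1-2m}\mid O\mid -I_{\,n-1-2m})$. This is precisely the hypothesis of Corollary \ref{cor}, with the top ($f$-)block split as $(A\mid B\mid C)$, where $A,C$ are the outer $n-1-2m$ columns and $B$ the middle $m+1$ columns. Applying Corollary \ref{cor} yields $|R'|=(-1)^{l(k-l+1)}\,|N|$ with $N=(B\mid A+C)$ an $(n-m)\times(n-m)$ matrix, the same size as $D_m(f)$. (Equivalently, and with a single cleaner sign, one can avoid the separate expansion and apply Schur's identity, Result \ref{Schur}, directly to $R_m(f)$ using the full identity block $W=I_{\,n-1-m}$ of $1$'s; moving those columns to the right costs the factor $(-1)^{(n-1-m)(n-m)}$, a product of consecutive integers and hence $+1$, and the lone rows drop out of the Schur complement automatically.)

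The core of the argument is then to identify $N$ with $D_m(f)$. Substituting $i=n-r$ turns the row exponent $q^{\,n-r}$ into $q^{i}$, so the rows of $N$ are indexed by $i=n-1-m,\dots,0$, matching the rows of $D_m(f)$ but in reverse order. For the columns, the point is that the fold $A+C$ adds two truncated staircase columns whose subscripts differ by exactly $n$: for each row the two contributions are mutually exclusive and together give $a_{(c-i)\bmod n}^{\,q^i}$ for a suitable $c$, so the overflow of the subscript past $\{0,\dots,n-1\}$ is precisely the cyclic reduction modulo $n$ built into the Dickson matrix — this is where $g=x^{q^n}-x$ enters. The middle columns $B$ produce the Dickson columns $c=n-1,\dots,n-1-m$ (which never wrap), and the folded columns $A+C$ produce the remaining columns $c=n-2-m,\dots,m$; listed in the order they occur in $N$, these are exactly the columns of $D_m(f)$ in reverse. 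Hence $N$ is $D_m(f)$ with both its rows and its columns reversed.

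Finally I would collect the signs. Reversing the $n-m$ rows and reversing the $n-m$ columns contribute the same permutation sign, so their product is $+1$ and $|N|=|D_m(f)|$. It then remains to check that the sign from the Laplace expansion of the lone rows and the factor $(-1)^{l(k-l+1)}$ from Corollary \ref{cor} cancel; the quickest confirmation is the Schur-complement computation above, whose lone exponent $(n-1-m)(n-m)$ is even. Combining gives $|R_m(f)|=|D_m(f)|$. The main obstacle is the index-and-exponent bookkeeping of the third paragraph: verifying entrywise that the folded staircase columns realize $a_{(c-i)\bmod n}^{q^i}$ and that the induced column order is exactly the reversal, so that the accumulated signs provably square (or cancel) to $+1$.
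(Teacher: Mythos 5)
Your proposal is correct and follows essentially the same route as the paper: Laplace expansion along the lone rows of the $g$-block, Corollary \ref{cor} applied to the resulting block structure $\begin{pmatrix} A & B & C \\ I & O & -I \end{pmatrix}$, and the identification of $\begin{pmatrix} B & A+C \end{pmatrix}$ with $T_{n-m}D_m(f)T_{n-m}$ so that the two reversal signs cancel. The only divergence is that the paper handles the range $m\geq (n-1)/2$ (where no $-1$'s survive in the $g$-block and the number of lone rows is $n-1-m$ rather than $m$) as a separate, easier case, whereas your main line tacitly assumes $m<(n-1)/2$; your parenthetical Schur-complement computation with $W=I_{n-1-m}$ covers both ranges uniformly and cleanly closes that gap.
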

\begin{proof}
Note that $D_{n-1}(f)=R_{n-1}(f)=(a_{n-1})$, so we may assume $m<n-1$. 
Let $T_k$ denote the $k\times k$ anti-diagonal matrix whose non-zero entries equal to one and let $I_k$ denote the $k\times k$ identity matrix. By $O$ we will always denote a zero matrix whose dimension will be clear from the context. We distinguish two cases. 

If $m \geq (n-1)/2$, then $2n-1-2m \leq n$ and hence 
$R_m(f)$ has the form: 
\[\begin{pmatrix} 	A & B \\ 
I_{n-1-m} & O \\ \end{pmatrix},\]
where $B=T_{n-m}D_m(f) T_{n-m}$.
We have 
\[\left|\begin{pmatrix} 	A & B \\ 
I_{n-1-m} & O \\ \end{pmatrix}\right|=(-1)^{(n-m-1)(n-m)}\left|\begin{pmatrix} 	B & A \\ 
O & I_{n-1-m} \\ \end{pmatrix}\right|,\]
 and hence by Result \ref{Schur}
\[|R_m(f)|=|B|=|D_m(f)|.\]

If $m < (n-1)/2$, then first consider the last $m$ rows of $R_m(f)$: for $k\in \{0,1,\ldots,m-1\}$ the $(2n-2m-1-k)$-th row of $R_m(f)$ contains only one non-zero entry, namely, a $1$ at position $n-1-m-k$. Then it is easy to see by row expansion applied to the last $m$ rows that: 
\[(-1)^{(n-1)m}|R_m(f)|=\left| 	\begin{pmatrix}
A & B & C \\
I_{n-2m-1} & O & -I_{n-2m-1} \\ \end{pmatrix} \right|,\]
where $A$ and $C$ are $(n-m)\times (n-2m-1)$ matrices and
\[\begin{pmatrix}
	B & A+C \\
\end{pmatrix} =T_{n-m}D_m(f)T_{n-m}.\]
According to Corollary \ref{cor}, 
\[(-1)^{(n-1)m}|R_m(f)|=(-1)^{(n-2m-1)(m+2)}|T_{n-m}D_m(f)T_{n-m}|,\]
which proves the assertion.

\end{proof}

Lemma \ref{lem} immediately yields Theorem \ref{conn}.

\medskip

For some $s$ with $\gcd(s,n)=1$ put $\sigma:=q^s$.
The set of $\sigma$-polynomials over $\F_{q^n}$ is isomorphic to the skew-polynomial ring 
$\F_{q^n}[t,\sigma]$ where $t\alpha=\alpha^{\sigma}t$ for all $\alpha\in \F_{q^n}$. 
Analogies for some of the results of Section \ref{qsubres} should hold in these non-commutative polynomial rings as well. Next we show a generalization of Theorem \ref{conn} for $\sigma$-polynomials.

Consider the $\sigma$-polynomial $f(x):=\sum_{i=0}^{n-1} a_i x^{\sigma^i} \in \F_{q^n}[x]$, which is also a $q$-polynomial. As before, by $\ker f$ we will denote $\gcd(f(x),x^{q^n}-x)$ and similarly to $D(f)$ we define
\[D_{\sigma}(f):=
\begin{pmatrix}
a_0 & a_1 & \ldots & a_{n-1} \\
a_{n-1}^\sigma & a_0^\sigma & \ldots & a_{n-2}^\sigma \\
\vdots & \vdots & \vdots & \vdots \\
a_1^{\sigma^{n-1}} & a_2^{\sigma^{n-1}} & \ldots & a_0^{\sigma^{n-1}}
\end{pmatrix}.\]
We will denote by $D_{m,\sigma}(f)$ the $(n-m)\times (n-m)$  matrix obtained from $D_{\sigma}(f)$ after removing its first $m$ columns and last $m$ rows.
Because of the applications it might be useful to have conditions on other minors of $D_{\sigma}(f)$. In the next corollary we show some results also in this direction.

\begin{corollary}
\label{fin}
If $f(x)=\sum_{i=0}^{n-1} a_i x^{\sigma^i} \in \F_{q^n}[x]$ with $\gcd(s,n)=1$, then 
$\dim_q (\ker f)=\mu$ if and only if
\begin{equation}
|D_{0,\sigma}(f)|=|D_{1,\sigma}(f)|=\ldots=|D_{\mu-1,\sigma}(f)|=0
\end{equation}
and $|D_{\mu,\sigma}(f)|\neq 0$. 

Index the rows and columns of $D_{\sigma}(f)$ from $0$ to $n-1$. For $0\leq m \leq \dim_q (\ker f)$ if $J,K \subseteq \{0,1,\ldots, n-1\}$ are two sets of $m$ consecutive integers modulo $n$ then let $M_{J,K}(f)$ denote the $(n-m)\times (n-m)$ matrix obtained from $D_{\sigma}(f)$ after removing its rows and columns with indices in $J$ and $K$, respectively. 
Then
\[|M_{J,K}(f)|=0 \Leftrightarrow |D_{m,\sigma}(f)|=0.\]
\end{corollary}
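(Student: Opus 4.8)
The plan is to split the corollary into its two assertions and reduce each to the already-proved Theorem \ref{conn}. For the first assertion, I would observe that the matrix $D_{m,\sigma}(f)$ is \emph{literally} the same object as $D_m(f)$ from Theorem \ref{conn}, only with the Frobenius map $x\mapsto x^q$ replaced everywhere by $x\mapsto x^\sigma=x^{q^s}$. Since $\gcd(s,n)=1$, the map $i\mapsto si \pmod n$ is a bijection of $\{0,1,\ldots,n-1\}$, so $\sigma=q^s$ is itself a generator of $\Gal(\F_{q^n}/\F_q)$. Thus $f$, regarded as a $\sigma$-polynomial, plays exactly the role that a $q$-polynomial played before, and the entire development of Section \ref{qsubres} and Lemma \ref{lem} carries over verbatim with $q$ replaced by $\sigma$: one has $|D_{m,\sigma}(f)|=|R_{m,\sigma}(f,x^{\sigma^n}-x)|$, and since $x^{\sigma^n}-x=x^{q^{sn}}-x$ has the same roots over $\F_{q^n}$ as $x^{q^n}-x$ (indeed $\sigma^n=q^{sn}$ fixes $\F_{q^n}$ pointwise), the nullity count from Proposition \ref{prop} gives $\dim_q(\ker f)=\mu$ iff the stated subresultant conditions hold. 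The only point to check carefully is that $\ker f$ computed as $\gcd(f,x^{q^n}-x)$ agrees with the $\sigma$-kernel, which it does because both describe the $\F_q$-null space of the same $\F_q$-linear map on $\F_{q^n}$.

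For the second assertion I would exploit the cyclic (circulant-like) structure of the Dickson matrix $D_\sigma(f)$. The key algebraic fact is that $D_\sigma(f)$ satisfies a twisted cyclic-shift relation: conjugating by the cyclic permutation matrix $P$ that shifts indices by one simultaneously shifts rows and columns and raises every entry to the power $\sigma$. Concretely, if $P$ is the $n\times n$ permutation matrix sending index $j$ to $j+1 \pmod n$, then $P\,D_\sigma(f)\,P^{-1}=D_\sigma(f)^{(\sigma)}$, where the superscript denotes raising every entry to the $\sigma$-th power. Because raising to the $\sigma$-th power is a field automorphism, it preserves the property of a determinant being zero or nonzero. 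Now given $J$ and $K$, two blocks of $m$ consecutive indices modulo $n$, I would use the shift to move $K$ to the canonical position $\{0,1,\ldots,m-1\}$: applying an appropriate power $P^{-t}(\cdot)P^{t}$ cyclically rotates both the index set being removed and raises entries to a power of $\sigma$, so that $|M_{J,K}(f)|=0$ iff $|M_{J',K'}(f')|=0$ for the rotated sets and the Frobenius-twisted polynomial $f'$.

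I would then reduce the general row-block $J$ to the canonical one. After fixing $K=\{0,\ldots,m-1\}$ (removing the first $m$ columns), the matrix $D_{m,\sigma}(f)$ is obtained by additionally removing the \emph{last} $m$ rows, i.e.\ $J=\{n-m,\ldots,n-1\}$. For an arbitrary consecutive block $J$, I would argue that cyclically shifting the rows corresponds to multiplying $f$ by a suitable power of the generating automorphism, which changes $f$ to an $\F_q$-linearly equivalent map (precisely, precomposition or postcomposition with an invertible power of the $\sigma$-Frobenius), and such an operation preserves $\dim_q(\ker f)$ and hence preserves the vanishing pattern of all the relevant minors. Chaining these equivalences yields $|M_{J,K}(f)|=0 \Leftrightarrow |D_{m,\sigma}(f)|=0$.

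The main obstacle I anticipate is the second assertion's bookkeeping: making the twisted-shift identity $P\,D_\sigma(f)\,P^{-1}=D_\sigma(f)^{(\sigma)}$ precise, and then verifying that removing a shifted consecutive block of rows (or columns) together with the Frobenius twist really transforms one minor into another minor of the same ``$m$ consecutive'' type whose determinant vanishes simultaneously. One must track the indices modulo $n$ carefully — especially the wrap-around case where $J$ or $K$ straddles the index $0$ — and confirm that the automorphism applied to the entries never turns a zero determinant into a nonzero one. Once the single-step shift is established as a determinant-preserving (up to $\sigma$-twist) operation on these block-deleted minors, iterating it $t$ times to bring $(J,K)$ into the canonical position used in the first assertion is routine, and the equivalence with $|D_{m,\sigma}(f)|=0$ follows.
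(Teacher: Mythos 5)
Your strategy for the second assertion matches the paper's: both reduce an arbitrary pair $(J,K)$ of consecutive blocks to the canonical pair defining $D_{m,\sigma}(f)$ by rank-preserving operations that cyclically shift the deleted row- and column-sets (the paper uses $f\mapsto f^{\sigma^t}$ together with the transposition $f\mapsto\hat f$; you use pre-/post-composition with Frobenius powers plus conjugation by the cyclic shift $P$ --- these are interchangeable), and then both invoke the first assertion, applied to the transformed polynomial of the same rank, to conclude that for $m\le\mu$ the vanishing of $|D_{m,\sigma}(\cdot)|$ depends only on the rank. That part is sound modulo the index bookkeeping you already flag.

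The genuine gap is in your first assertion. Running Section~\ref{qsubres} with $q$ replaced by $\sigma$ forces you to take $g(x)=x^{\sigma^n}-x=x^{q^{sn}}-x$, and the $\sigma$-version of Proposition~\ref{prop} then controls $\deg\gcd(f,x^{q^{sn}}-x)$, i.e.\ the $\F_{q^s}$-dimension $\mu'$ of the kernel of $f$ acting on the \emph{larger} field $\F_{q^{sn}}$. What the corollary concerns is $\mu$, the $\F_q$-dimension of $\gcd(f,x^{q^n}-x)$, the kernel on $\F_{q^n}$. Your justification --- that $x^{q^{sn}}-x$ ``has the same roots over $\F_{q^n}$ as $x^{q^n}-x$'' --- does not close this: the root set of $x^{q^{sn}}-x$ is all of $\F_{q^{sn}}\supsetneq\F_{q^n}$ when $s>1$, and a priori $\ker f\cap\F_{q^{sn}}$ could be strictly larger than the $\F_{q^s}$-span of $\ker f\cap\F_{q^n}$, so $\mu'=\mu$ requires proof. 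This is precisely where the paper inserts its one nontrivial ingredient: the conjugation $PD(f)P^{-1}=D_{\sigma}(f)$ by the permutation $i\mapsto si\pmod n$ (available because $\gcd(s,n)=1$), which shows the $\sigma$-Dickson and $q$-Dickson matrices of $f$ have the same rank; since the former computes $n-\mu'$ and the latter $n-\mu$, the two kernel dimensions coincide. (A Galois-descent argument giving $\ker f\cap\F_{q^{sn}}=\F_{q^s}\otimes_{\F_q}(\ker f\cap\F_{q^n})$ would also serve.) Without some such step your first assertion, and hence the concluding step of your second, is unsupported.
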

\begin{proof}
Consider $f$ as a $q$-polynomial with $\dim_q(\ker f)=\mu$. 
This happens if and only if $D(f)$ has rank $\mu$.
Recall that rows and columns of $D(f)$ are indexed from $0$ to $n-1$ and let $P$ denote the permutation matrix for which the $i$-th row of $PA$ is the $si$-th row of $A$ (considered modulo $n$).
Then $PAP^{-1}=D_{\sigma}(f)$ and hence the rank of $D_{\sigma}(f)$ is the same as the rank of $D(f)$  (cf. also \cite[Remark 2.3]{Puncturing}).
Note that $D_{\sigma}(f)$ is the Dickson matrix of a $\sigma$-polynomial considered as an $\F_{\sigma}$-linear transformation of $\F_{\sigma^n}$ with kernel a $\mu$-dimensional $\F_{\sigma}$-subspace of $\F_{\sigma^n}$. By Theorem \ref{conn} this happens if and only if the conditions on $|D_{m,\sigma}(f)|$ holds for $0\leq m \leq \mu$.

For the second part take $0\leq m \leq \dim_q (\ker f)$.
Note that for any $\sigma$-polynomial $g(x)=\sum_{i=0}^{n-1}b_ix^{\sigma^i}\in \F_{q^n}[x]$ and for any non-negative integer $t$ the rank of $g(x)$ is the same as 
\begin{enumerate}
\item the rank of $g(x)^{\sigma^{t}}$ considered modulo $x^{q^n}-x$,
\item the rank of $\hat{g}(x):=\sum_{i=0}^{n-1}b_{n-i}^{\sigma^i}x^{\sigma^i}$ (since $D_{\sigma}(g)^T=D_{\sigma}(\hat{g})$, where by $^T$ we denote matrix transposition).
\end{enumerate}

Suppose $J=\{j,j+1,\ldots,j+m-1\}$ and $K=\{k,k+1,\ldots,k+m-1\}$ considered modulo $n$. 
Then $f_1(x):=f(x)^{\sigma^{n-k-m}}$ modulo $x^{q^n}-x$ has the same rank as $f(x)$ and 
$|M_{J,K'}(f_1)|=|M_{J,K}(f)|^{\sigma^{n-k-m}}$ where $K'=\{n-m,m+1,\ldots,n-1\}$.
Then $\hat{f_1}(x)$ has the same rank as $f_1(x)$ and 
$|M_{K',J}(\hat{f_1})|=|M_{J,K'}(f_1)|$. Finally, 
$f_2(x):=\hat{f_1}(x)^{\sigma^{n-j}}$ modulo $x^{q^n}-x$ has the same rank as $\hat{f_1}(x)$ and 
$|M_{K',J'}(f_2)|=|M_{K',J}(\hat{f_1})|^{\sigma^{n-j}}$ where $J'=\{0,1,\ldots,m-1\}$.
By definition $M_{K',J'}(f_2)=D_{m,\sigma}(f_2)$, and hence
\[|D_{m,\sigma}(f_2)|=0 \Leftrightarrow |M_{K',J}(\hat{f_1})|=0 \Leftrightarrow |M_{J,K'}(f_1)|=0 \Leftrightarrow |M_{J,K}(f)|=0.\]
Recall $0\leq m \leq \dim_q (\ker f)$. Since $f_2$ and $f$ has the same rank, it follows from the first part of the assertion that $|D_{m,\sigma}(f_2)|=0 \Leftrightarrow |D_{m,\sigma}(f)|=0$ and this finishes the proof.
\end{proof}

\subsection{Applications}

A $q$-polynomial $f(x)\in \F_{q^n}[x]$ is called \emph{scattered} if $\{f(x)/x \colon x\in \F_{q^n}\setminus \{0\}\}$ (the \emph{set of directions determined by the graph of $f$}) has maximum size, that is $(q^n-1)/(q-1)$.
Put $U_f=\{(x,f(x)) \colon x\in \F_{q^n}\}$, which is an $n$-dimensional $\F_q$-subspace of $\F_{q^n}^2$.  
The \emph{linear set} of $\PG(1,q^n)$ defined by $f$ is the set of projective points 
$L_f:=\{\la (x,f(x)) \ra_{q^n} \colon x\in \F_{q^n}\setminus \{0\}\}$. The \emph{weight} of a point $\la (a,b)\ra_{\F_{q^n}}\in \PG(1,q^n)$ w.r.t. the $\F_q$-subspace $U_f$ is 
$\dim_q \la (a,b)\ra_{\F_{q^n}} \cap U_f$.
The polynomial $f$ is scattered if and only if the points of $L_f$ have weight $1$.
In this case $L_f$ and $U_f$ are called \emph{maximum scattered}. 
This happens if and only if the $\F_q$-linear transformations of $\F_{q^n}$ in the $\F_{q^n}$-subspace $M:=\la x, f(x) \ra_{\F_{q^n}}$ have rank at least $n-1$. Equivalently, $M$ is equivalent to an $\F_{q^n}$-linear maximum rank distance (MRD for short) code of $\F_q^{n\times n}$ with minimum distance $n-1$. 
For more details about these objects and the relations among them we refer to \cite[Section 13.3.6]{John} and the references therein.

\begin{corollary}
	\label{key}
	Consider the $q$-polynomial $f(x)=\sum_{i=0}^{n-1}a_i x^{q^i}\in \F_{q^n}[x]$ and
	with $y$ as a variable consider the matrix
	\[H:=\begin{pmatrix}
	y & a_1 & \ldots & a_{n-1} \\
	a_{n-1}^q & y^q & \ldots & a_{n-2}^q \\
	\vdots & \vdots & \vdots & \vdots \\
	a_1^{q^{n-1}} & a_2^{q^{n-1}} & \ldots & y^{q^{n-1}}
	\end{pmatrix}.\]
The determinant of the $(n-m)\times (n-m)$  matrix obtained from $H$ after removing its first $m$ columns and last $m$ rows is a polynomial $H_m(y)\in \F_{q^n}[y]$. Then the following holds:
\begin{enumerate}
	\item The roots of $H_0(y)$ are in $\F_{q^n}$,
	\item the number of points of weight $\mu$ of $L_f$ w.r.t. $U_f$ is the same as the number of common roots of $H_0(y),H_1(y),\ldots, H_{\mu-1}(y)$ which are not roots of $H_{\mu}(y)$,
	\item in particular $f(x)$ is scattered if and only if $H_0(y)$ and $H_1(y)$ have no common roots.
\end{enumerate}
\end{corollary}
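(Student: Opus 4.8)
The plan is to deduce all three assertions from Theorem \ref{conn} applied to the shifted map $f-cx$. The starting observation is that replacing the constant coefficient $a_0$ of $f$ by $a_0-c$ turns $D(f)$ into the Dickson matrix $D(f-cx)$, which is precisely $H$ specialized at $y=a_0-c$; hence $D_m(f-cx)=H_m$-matrix at $y=a_0-c$ and so $H_m(a_0-c)=|D_m(f-cx)|$ for every $m$. Moreover, for $c\in\F_{q^n}$ the weight of the affine point $\langle(1,c)\rangle_{\F_{q^n}}$ equals $\dim_q\ker(f-cx)$, because $\langle(1,c)\rangle_{\F_{q^n}}\cap U_f=\{(x,f(x))\colon f(x)=cx\}$. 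Since $U_f$ is a graph, the point at infinity has weight $0$, so every point of positive weight is affine, and $c\mapsto y=a_0-c$ is a bijection of $\F_{q^n}$ carrying these points to the corresponding specializations of $y$.

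I would prove (1) first, and I expect it to be the only real obstacle: one must control the roots of $H_0(y)=|H|$ in all of $\overline{\F_q}$, not merely in $\F_{q^n}$, so the cited fact \emph{rank of $D(h)$ equals rank of $h$}, stated for $h$ over $\F_{q^n}$, is not directly available for $h=f-cx$ with $c\notin\F_{q^n}$. The key step is its extension: for any $h(x)=\sum_{i=0}^{n-1}b_i x^{q^i}$ with $b_i\in\overline{\F_q}$ one has $|D(h)|=0$ if and only if $h$ has a nonzero root in $\F_{q^n}$. To see this, fix an $\F_q$-basis $x_1,\dots,x_n$ of $\F_{q^n}$ and let $V=(x_s^{q^i})_{i,s}$ be the associated Moore matrix. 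A direct computation gives $D(h)V=(h(x_s)^{q^i})_{i,s}$, since its $(i,s)$ entry is $\sum_j b_{(j-i)\bmod n}^{q^i}x_s^{q^j}=h(x_s)^{q^i}$, using $x_s\in\F_{q^n}$. Now $V$ is invertible (the Moore determinant of a basis is nonzero), and a Moore determinant $(z_s^{q^i})_{i,s}$ vanishes exactly when $z_1,\dots,z_n$ are $\F_q$-linearly dependent; therefore $|D(h)|=0$ forces $h(x_1),\dots,h(x_n)$ to be $\F_q$-dependent, i.e. $h$ to be non-injective on $\F_{q^n}$. Applying this to $h=f_{y_0}$ for a root $y_0\in\overline{\F_q}$ of $H_0$ yields a nonzero $x_0\in\F_{q^n}$ with $y_0 x_0+\sum_{i\ge1}a_i x_0^{q^i}=0$, whence $y_0=-\sum_{i\ge1}a_i x_0^{q^i}/x_0\in\F_{q^n}$, proving (1).

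With (1) in hand, (2) and (3) follow from Theorem \ref{conn}. For (2): a point of $L_f$ has weight $\mu$ iff the associated $c\in\F_{q^n}$ satisfies $|D_0(f-cx)|=\dots=|D_{\mu-1}(f-cx)|=0$ and $|D_\mu(f-cx)|\ne0$, i.e. iff $y=a_0-c$ is a common root of $H_0,\dots,H_{\mu-1}$ that is not a root of $H_\mu$. By (1) every common root of $H_0,\dots,H_{\mu-1}$ already lies in $\F_{q^n}$, so the bijection $c\leftrightarrow y=a_0-c$ identifies the weight-$\mu$ points with exactly these roots and gives the claimed equality of cardinalities. For (3): $f$ is scattered iff no point of $L_f$ has weight $\ge2$, and by Theorem \ref{conn} a point $\langle(1,c)\rangle$ has weight $\ge2$ precisely when $H_0(a_0-c)=H_1(a_0-c)=0$; once more by (1) such a common root automatically lies in $\F_{q^n}$, so being scattered is equivalent to $H_0$ and $H_1$ having no common root in $\overline{\F_q}$.

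The main difficulty is (1): the whole argument hinges on the identity $D(h)V=(h(x_s)^{q^i})_{i,s}$ together with the $\F_q$-independence criterion for Moore determinants, which is exactly what upgrades the conclusion from \emph{roots lie in $\overline{\F_q}$} to \emph{roots lie in $\F_{q^n}$}. The remaining work—matching weights with the vanishing pattern of the $H_m$, verifying that the point at infinity contributes nothing, and checking the bijection $c\leftrightarrow y$—is routine once Theorem \ref{conn} is invoked.
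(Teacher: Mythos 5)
Your proof is correct, and parts (2) and (3) proceed exactly as in the paper: apply Theorem \ref{conn} to $f-cx$ for $c\in\F_{q^n}$, note that the point at infinity has weight $0$, and use part (1) to guarantee that no common roots of the $H_i$ are missed by restricting to $y\in\F_{q^n}$. Where you genuinely diverge is in part (1), and your instinct that this is the only real obstacle matches the paper's emphasis. The paper handles it by observing that Lemma \ref{lem} (the identity $|D_m(f)|=|R_m(f)|$) is a pure matrix identity valid for coefficients in $\overline{\F_q}$, so $H_0(y_0)=|R_{0,q}(y_0x+\sum_{i\geq 1}a_ix^{q^i},\,x^{q^n}-x)|$, and then Theorem \ref{resoriginal2} (which is proved over $\overline{\F_q}$ from the outset) yields a nonzero common root $x_0\in\F_{q^n}$ of the two polynomials, forcing $y_0=-\sum_{i\geq1}a_ix_0^{q^i-1}\in\F_{q^n}$. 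You instead prove the needed extension of the Dickson-matrix criterion directly, via the identity $D(h)V=(h(x_s)^{q^i})_{i,s}$ for a Moore matrix $V$ of an $\F_q$-basis and the classical Moore determinant factorization; this is a correct and self-contained route that bypasses the subresultant machinery entirely. The paper's argument buys brevity, since Lemma \ref{lem} and Theorem \ref{resoriginal2} are already in place and were deliberately set up over $\overline{\F_q}$; yours buys independence from Section \ref{qsubres} and makes explicit the general fact (vanishing of $|D(h)|$ for $h$ with coefficients in $\overline{\F_q}$ detects a nonzero root in $\F_{q^n}$) that the paper only uses implicitly. Both arrive at the same conclusion, and I see no gap in your version.
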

\begin{proof}
	Let $y_0$ be a root of $H_0(y)$.
	Note that Lemma \ref{lem} does not require the coefficients of $f$ to be in $\F_{q^n}$, thus also for $y_0\in \overline{\F}_q$ we have  $0=H_0(y_0)=|R_{0,q}(y_0x+\sum_{i=1}^{n-1}a_ix^{q^i}, x^{q^n}-x)|$ and hence by 
	Theorem \ref{resoriginal2} there exists $x_0\in \F_{q^n}\setminus \{0\}$ such that
	$y_0=-\sum_{i=1}^{n-1}a_ix_0^{q^i-1}$. Here the right-hand side is in $\F_{q^n}$ and hence $y_0\in \F_{q^n}$.
	
	By Theorem \ref{conn} $H_0(y_0)=H_1(y_0)=\ldots=H_{\mu-1}(y_0)=0$ and $H_{\mu}(y_0)\neq 0$ hold if and only if the $q$-polynomial $(y_0-a_0)x+f(x)\in \F_{q^n}[x]$ has nullity $\mu$, equivalently, the point $\la (1,a_0-y_0) \ra_{q^n}$ has weight $\mu$.
	
	The last part follows from the fact that $f$ is scattered if and only if $L_f$ does not have points of weight larger than $1$.
\end{proof}

	In \cite{DanieleMaria} Part 3. of Corollary \ref{key} is used to derive sufficient and necessary conditions for $f(x)=bx^q+x^{q^4}\in \F_{q^6}[x]$ to be a scattered polynomial and to prove \cite[Conjecture 7.5]{withZanella} regarding the number of scattered polynomials of this form.
	
	\medskip

	In \cite{withZhou} the authors study MRD-codes with maximum idealisers, or equivalently, the problem of finding sets of distinct integers $\{t_0,t_1,\ldots, t_k\}$ such that every $\F_q$-linear transformation of $\F_{q^n}$ in the $\F_{q^n}$-subspace $\la x^{q^{t_0}}, x^{q^{t_1}},\ldots, x^{q^{t_k}} \ra_{\F_{q^n}}$ has rank at least $n-k$.
	In \cite[Corollary 3.6]{withZhou} it is stated that in $M:=\la x,x^{q},x^{q^{2}},x^{q^{4}}\ra_{\F_{q^9}}$ one can find an $\F_q$-linear transformation of $\F_{q^9}$ with rank at most $5$ and hence the set of integers $\{0,1,2,4\}$ does not satisfy the above mentioned condition. 
	In \cite{withZhou} this was proved by calculating sixteen $6\times 6$ submatrices of 
	$D(f)$, where $f(x)=-x+(1+c^{-q})x^q+cx^{q^2}-x^{q^4}$ and $c\in \F_{q^9}$ satisfies certain conditions, and by proving that each of them has zero determinant. 
	According to Theorem \ref{conn} the same result follows also by calculating only 
	$|D_0(f)|$, $|D_1(f)|$, $|D_2(f)|$, $|D_3(f)|$ and by proving that all of them are zero.

\subsection*{Acknowledgement}

The author is thankful to 
Tam\'as H\'eger from whom he learned Result \ref{resoriginal} and its proof,  which was adapted to prove Theorem \ref{resoriginal2}.

\bigskip

\noindent Bence Csajb\'ok\\
MTA--ELTE Geometric and Algebraic Combinatorics Research Group\\
ELTE E\"otv\"os Lor\'and University, Budapest, Hungary\\
Department of Geometry\\
1117 Budapest, P\'azm\'any P.\ stny.\ 1/C, Hungary\\
{{\em csajbokb@cs.elte.hu}}

\end{document}